 \numberwithin{equation}{section} 
\newcommand{\beq}{\begin{equation}}
\newcommand{\eeq}{\end{equation}}
\newcommand{\ben}{\begin{eqnarray}}
\newcommand{\een}{\end{eqnarray}}
\newcommand{\bet}{\begin{eqnarray*}}
\newcommand{\eet}{\end{eqnarray*}}
\newtheorem{thm}{Theorem}[section]
\newtheorem{lem}[thm]{Lemma}
\newtheorem{prop}[thm]{Proposition}
\newtheorem{exa}[thm]{Example}
\newcommand{\R}{\mathbb{R}}
\newcommand{\Z}{\mathbb{Z}}
\theoremstyle{plain}
\begin{document}
\baselineskip 16pt

\title{On arithmetic sums of connected sets in $\R^2$}

\author{Yu-feng Wu}
\address[]{School of Mathematics and Statistics,\\ Central South University,\\ Changsha,
410085, PR China}\email{yufengwu.wu@gmail.com}

\keywords{Arithmetic sum,  connected sets, interior}

 \thanks {2020 {\it Mathematics Subject Classification}: primary 54F15; secondary 54D05, 54D30}

\begin{abstract}
We prove that for two connected sets $E,F\subset\R^2$ with cardinalities greater than $1$, if one of $E$ and $F$ is compact and not a line segment, then the arithmetic sum $E+F$ has non-empty interior. This improves a recent result of Banakh, Jabłońska and Jabłoński \cite[Theorem 4]{BJJ2019} in  dimension two by relaxing their assumption that $E$ and $F$ are both compact. 
\end{abstract}

\maketitle

\setcounter{section}{0}

\section{Introduction}

Given finitely many sets  $E_1,\ldots, E_n\subset \R^d$, their arithmetic sum is defined as 
\[E_1+\cdots+E_n=\{x_1+\cdots+x_n: x_i\in E_i \text{ for } 1\leq i\leq n\}.\]
A fundamental question is to find suitable conditions on $E_1,\ldots, E_n$ under which their arithmetic sum has non-empty interior. There are two classical results on this question. First, if two sets $E,F\subset \R^d$ are large in the sense of having positive Lebesgue measure (and measurable), then the (generalized) Steinhaus theorem states that $E+F$ has non-empty interior. Second, Piccard's thoerem says that the same conclusion holds when $E$ and $F$ are large in the sense of being of second category in $\R^d$ and having the Baire property. For a detailed account on these two results, the reader is referred to the monograph of Oxtoby \cite{Oxtoby80}. 

There also have been many works on the above question for sets which are small  in the sense of both measure and topology, which are often fractal sets. Studies in this direction also date back to a work of  Steinhaus, who in \cite{Steinhaus17} first observed that the arithmetic sum of the middle-third Cantor set with itself is the interval $[0,2]$. Subsequential generalizations include the works of Hall \cite{Hall47}, Newhouse \cite{Newhouse79} and Astels \cite{Astels00}, to merely name a few. Very recently, Feng and the author \cite{FW} studied the above question for fractal sets in higher dimensional Euclidean spaces. 

Recently, considerable attention has been given to the study of arithmetic sums involving connected sets. Chang \cite{AChang18} proved that if $E\subset \R^2$ is a curve connecting the points $(0,0)$ and $(1,0)$, and $F\subset \R^2$ is a curve connecting  $(0,0)$ and $(0,1)$, then $E+F+\Z^2=\R^2$. Simon and Taylor \cite{STinterior} studied the question when the arithmetic sum of a $C^2$ curve and a certain class of fractal sets in $\R^2$ has non-empty interior. They  \cite{STdimmea}  also studied the dimension and measure  of the arithmetic sum of a $C^2$ curve and a set in the plane. Very recently,  Banakh, Jabłońska and Jabłoński  proved the following result, which motivated the present paper. 

\begin{thm}\cite[Theorem 4]{BJJ2019}
\label{thm-BJJ}
Let $K_1,\ldots, K_d\subset \R^d$ be compact connected sets. Suppose that there exist $a_i,b_i\in K_i$ for $i=1,\ldots,d$ such that the vectors $a_1-b_1,\ldots, a_d-b_d$ are linearly independent. Then $K_1+\cdots +K_d$ has non-empty interior.
\end{thm}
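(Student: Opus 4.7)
The plan is to prove Theorem~\ref{thm-BJJ} by a Brouwer degree argument applied to an $\epsilon$-approximating summation map, followed by a limit as $\epsilon \to 0$. First I would normalize: translating each $K_i$ by $-b_i$ and applying the linear isomorphism of $\R^d$ sending $v_i := a_i - b_i$ to the standard basis vector $e_i$ reduces the problem to the case $0, e_i \in K_i$ for every $i$, since both operations preserve the property of having non-empty interior in the sum. Next, for each $\epsilon > 0$, since $K_i$ is compact and connected, the open $\epsilon$-neighborhood $N_\epsilon(K_i)$ is open and connected in $\R^d$, hence path-connected; this allows me to choose a continuous path $\gamma_i^\epsilon : [0, 1] \to N_\epsilon(K_i)$ with $\gamma_i^\epsilon(0) = 0$ and $\gamma_i^\epsilon(1) = e_i$.

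Define $\Phi^\epsilon : [0, 1]^d \to \R^d$ by $\Phi^\epsilon(t_1, \ldots, t_d) = \gamma_1^\epsilon(t_1) + \cdots + \gamma_d^\epsilon(t_d)$; then $\Phi^\epsilon([0, 1]^d) \subset N_{d\epsilon}(K_1 + \cdots + K_d)$. The key step is to show that at a suitably chosen interior test point $p^* \in \R^d$ (depending on the $K_i$ but not on $\epsilon$), the Brouwer degree $\deg(\Phi^\epsilon, (0, 1)^d, p^*)$ equals $1$. The natural comparison map is the ``straight-line'' summation $L(t) = \sum_i t_i e_i = (t_1, \ldots, t_d)$, which is the identity on $[0, 1]^d$ and has degree $1$ at every interior point. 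Via the linear homotopy $H_s = (1 - s) L + s \Phi^\epsilon$, if $H_s(\partial [0, 1]^d)$ avoids $p^*$ for every $s \in [0, 1]$, then $\deg(\Phi^\epsilon, (0,1)^d, p^*) = 1$, so by stability of degree an open ball $B(p^*, \rho)$ lies in $\Phi^\epsilon((0, 1)^d) \subset N_{d\epsilon}(K_1 + \cdots + K_d)$. Taking $\epsilon \to 0$ along a sequence with a uniform lower bound on $\rho$, and using compactness of $K_1 + \cdots + K_d$ (so that $\bigcap_\epsilon N_{d\epsilon}(K_1 + \cdots + K_d) = K_1 + \cdots + K_d$), transfers the open ball inside $K_1 + \cdots + K_d$ itself.

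The hard part will be ensuring both that the homotopy avoids $p^*$ on $\partial [0, 1]^d$ and that the radius $\rho$ is bounded below uniformly in $\epsilon$. The main difficulty is that each path $\gamma_i^\epsilon$ is only constrained to lie in $N_\epsilon(K_i)$, and $K_i$ may spread arbitrarily far from the straight segment $[0, e_i]$, so the deviation $\Phi^\epsilon - L$ on the boundary of the cube can be large in the directions transverse to $e_i$. I would handle this by choosing $\gamma_i^\epsilon$ to be a polygonal arc through an $\epsilon$-net of $K_i$ whose $i$-th coordinate $\pi_i \circ \gamma_i^\epsilon$ is strictly monotone from $0$ to $1$ on $[0, 1]$ -- achievable by a small perturbation along the $e_i$-direction inside $N_\epsilon(K_i)$, using that $\pi_i(K_i) \supseteq [0, 1]$ by connectedness -- and then choosing $p^*$ in $(0, 1)^d$ with each coordinate far from $0$ and $1$ relative to the worst-case transverse spread $\max_i \mathrm{diam}(K_i)$. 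The linear independence of $v_1, \ldots, v_d$ (which has become the standard basis after normalization) is essential here: it makes $L$ a linear isomorphism with non-degenerate Jacobian, so that the degree of the ``main'' piece of $\Phi^\epsilon$ is nonzero and the argument is robust to the wild perturbations coming from the non-path-connectedness of the $K_i$.
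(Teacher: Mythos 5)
This statement is not proved in the paper at all: it is quoted from Banakh, Jab\l{}o\'nska and Jab\l{}o\'nski, where it is established via a topological proposition on products of continua, so your attempt can only be judged on its own correctness. Judged that way, it has a genuine gap, concentrated exactly where the substance of the theorem lies: the boundary control needed for the degree/homotopy step. First, your claimed construction of $\gamma_i^\epsilon$ with $\pi_i\circ\gamma_i^\epsilon$ strictly monotone inside $N_\epsilon(K_i)$ is false in general. Take $d=2$ and $K_1$ a simple polygonal arc from $(0,0)$ to $e_1=(1,0)$ that first travels out to the line $x=2$ and only then returns (e.g.\ through $(0,1),(2,1),(2,-1),(1,-1)$). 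For small $\epsilon$ the neighborhood $N_\epsilon(K_1)$ is a thin embedded tube around this arc, and every path in it from $0$ to $e_1$ must pass within $\epsilon$ of the point $(2,0)$ and then return to $x=1$; its first coordinate necessarily backtracks by an amount of order $1$, not $\epsilon$. The fact that $\pi_i(K_i)\supseteq[0,1]$ is irrelevant here, since the path is constrained to the tube, not merely to have the right coordinate range.

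Second, even if such monotone parametrizations existed, they would not give the avoidance $p^*\notin H_s(\partial[0,1]^d)$. On the face $t_i=0$ one has $\pi_i(\Phi^\epsilon(t))=\sum_{j\neq i}\pi_i(\gamma_j^\epsilon(t_j))$, which is governed by the transverse spread of the \emph{other} continua and is untouched by the monotonicity of $\gamma_i^\epsilon$; nothing prevents the image of that face (and of the interpolants $H_s$) from sweeping across any candidate $p^*$. Your proposed remedy, choosing $p^*\in(0,1)^d$ with coordinates ``far from $0$ and $1$ relative to $\max_i \mathrm{diam}(K_i)$,'' is vacuous: after your normalization the diameters can be arbitrarily large compared with $1$, while $p^*$ must stay in the unit cube for the comparison map $L=\mathrm{id}$ to have degree $1$ at $p^*$. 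Consequently the key claims $\deg(\Phi^\epsilon,(0,1)^d,p^*)=1$ and the $\epsilon$-uniform lower bound on $\rho$ are both unsupported (the final step, passing from $B(p^*,\rho)\subset N_{d\epsilon}(K_1+\cdots+K_d)$ for all $\epsilon$ to $B(p^*,\rho)\subset K_1+\cdots+K_d$ by compactness, is fine). Some device playing the role of the Poincar\'e--Miranda-type result for products of continua used in the cited source---or a genuinely different mechanism for controlling the sum map on the ``faces'' of the product despite the summands' arbitrary spread---is needed; as written, the outline reduces the theorem to precisely the point it leaves open.
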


Banakh {\it et al.} proved Theorem \ref{thm-BJJ} by  making an elegant use  of a result in topology  on products of continua (see \cite[Proposition 1]{BJJ2019}). A natural question arises to what extent   the compactness assumption in Theorem \ref{thm-BJJ} can be relaxed. In this note, we investigate this question in the case when $d=2$. By using a completely different approach, we  obtain the following result.  
\begin{thm}
\label{conthm-1.1}
Let $E, F\subset \R^2$ be  connected  sets with cardinalities greater than $1$. If  $F$ is compact and not a line segment,  then  $E+F$ has non-empty  interior.
\end{thm}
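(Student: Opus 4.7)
The plan is to reduce Theorem \ref{conthm-1.1} to Theorem \ref{thm-BJJ} by first extracting a bounded connected subset of $E$, passing to its closure to apply Theorem \ref{thm-BJJ}, and finally upgrading the resulting density statement to an open-set inclusion.

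First, since $E$ is connected with $|E|>1$ it has no isolated points. Fixing $e_0\in E$ and choosing $r>0$ small enough that $E\not\subseteq\overline{B(e_0,r)}$, the standard boundary-bumping lemma applied to the proper open subset $E\cap B(e_0,r)$ of the connected space $E$ shows that the component $E_r$ of $e_0$ in $E\cap\overline{B(e_0,r)}$ meets the sphere $\partial B(e_0,r)$. Thus $E_r$ is a bounded connected subset of $E$ with $|E_r|\ge 2$, and its closure $\overline{E_r}$ is a nondegenerate continuum in $\R^2$. Since $F$ is compact, connected, and not a line segment, $F-F$ contains two linearly independent vectors, at least one of which is not parallel to any fixed nonzero vector in $\overline{E_r}-\overline{E_r}$. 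Theorem \ref{thm-BJJ} applied to $\overline{E_r}$ and $F$ therefore yields an open ball $U\subseteq\overline{E_r}+F$. Because $F$ is compact, $\overline{E_r}+F=\overline{E_r+F}$, so $E_r+F$ is dense in $U$.

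The heart of the proof is now to promote this density to actual containment. A point $x\in U$ misses $E+F$ exactly when $(x-F)\cap E=\emptyset$; if this happened, the continuum $x-F$ would meet $\overline{E_r}$ only at limit points of $E_r$ that do not belong to $E_r$ itself. My plan is to exploit the hypothesis that $F$ is not a line segment a second time: the set $x-F$ is itself a compact connected set spanning two linearly independent directions, so as $x$ varies over $U$ the family $\{x-F\}_{x\in U}$ has genuine two-dimensional freedom. Combining this with the global connectedness of $E_r$ inside $\overline{E_r}$---for instance via a Baire-category argument applied to the ``bad'' set $\{x\in U:(x-F)\cap E=\emptyset\}$, or by a direct topological perturbation of $x$ in two independent directions---should force $(x-F)\cap E_r\ne\emptyset$ for $x$ ranging over a nonempty open subset $U'\subseteq U$, giving $U'\subseteq E_r+F\subseteq E+F$.

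The main obstacle lies in carrying out this final step without any regularity of $E$. In particular, $E$ could be a Knaster--Kuratowski fan, a connected planar set having no nontrivial subcontinua, which rules out any approach that tries to locate an arc or a local continuum inside $E$. The proof must therefore use only the global connectedness of $E$ (as reflected in the bounded piece $E_r$) together with the two-dimensional structure of $F$, carefully analyzing how the continuum $x-F$ can meet $\overline{E_r}\setminus E_r$ as $x$ varies in $U$ and ruling out that the exceptional set fills an open subset of $U$.
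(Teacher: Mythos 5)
The decisive step of your argument --- upgrading ``$E_r+F$ is dense in the open ball $U$'' to ``$E+F$ contains an open set'' --- is not actually carried out; it is only described as something a Baire-category or perturbation argument ``should'' yield. But this step is essentially the entire content of the theorem, not a routine upgrade. Density of a sumset in an open set is perfectly compatible with empty interior: in the paper's Example \ref{exa:jones} the sum of a connected set and a compact segment is dense in $\R^2$ yet has empty interior, and in your situation the bad set $U\setminus(E+F)$ need not be closed, measurable, or have the Baire property, so no category or measure argument can convert ``the bad set is small'' into ``it misses an open set''. The paper's proof does not route through Theorem \ref{thm-BJJ} at all; it assumes $(E+F)^{\circ}=\emptyset$, so that $D=(E+F)^c$ is dense, and contradicts the connectedness of $E$ by proving (Lemma \ref{conlem-1.4}) that $\bigcap_{z\in D}(z-F)^c\supseteq E$ is totally disconnected; this relies on a concrete geometric mechanism --- a bounded complementary component of $F\cup K$ for a compact $K$ contained in a line, produced in Proposition \ref{pro-1.4} from the hypothesis that $F$ is not a segment --- which has no counterpart in your outline. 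Your final paragraph correctly identifies the difficulty (e.g.\ $E$ may contain no nondegenerate subcontinuum), but identifying it is not the same as resolving it, so the proposal as it stands does not prove the theorem.

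There is also a gap at the very first step. The boundary-bumping lemma is a statement about compact (or at least locally compact, complete) spaces and is false for general connected subsets of $\R^2$: if $E$ is the Knaster--Kuratowski fan, $e_0$ a non-apex point, and $r$ small enough that $\overline{B(e_0,r)}$ misses the apex, then $E\cap\overline{B(e_0,r)}$ is totally disconnected, so the component of $e_0$ in it is $\{e_0\}$ and does not reach the sphere. Hence for unbounded $E$ you have not produced a bounded connected $E_r$ with more than one point, and it is not clear that one exists in general (for connected sets in which every nondegenerate connected subset is dense in the whole set, such as widely connected sets, an unbounded example would admit no such $E_r$); for bounded $E$ one can of course take $E_r=E$, but then $\overline{E_r}\setminus E$ may be almost all of $\overline{E_r}$, which is exactly what makes the unproved final step hard. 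So both the reduction to Theorem \ref{thm-BJJ} and the passage back from it contain genuine gaps, and the second one is where the real work of the paper (Lemmas \ref{conlem-1.2}--\ref{conlem-1.4} and Proposition \ref{pro-1.4}) is done.
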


Theorem \ref{conthm-1.1} improves Theorem \ref{thm-BJJ} when $d=2$, since we allow one of the two connected sets to be non-compact. We also show that the assumptions in Theorem \ref{conthm-1.1} can not be further relaxed. More precisely, we show that  if $F\subset \R^2$ is a line segment, then  there exists a   non-compact connected set $E\subset \R^2$ not lying  in a line such that $E+F$ has empty interior. Also, we will give examples of non-compact connected sets $E,F\subset \R^2$, neither of which is contained in a line,  such that $E+F$  has empty interior. Theorem \ref{conthm-1.1} combining with these examples gives a  full answer to the above question on the compactness assumption in Theorem  \ref{thm-BJJ} in the case when $d=2$.

Our strategy to prove Theorem  \ref{conthm-1.1} is as follows: first, we prove the conclusion when the complement of $F$ has at least one bounded connected component. Then we refine this result by proving that if there is a compact set $K$ lying in a line in $\R^2$ such that the complement of $F\cup K$ has at least one bounded connected component, then $E+F$ has non-empty interior as well.  These two results are stated and proved in $\R^d$, see Lemmas \ref{conlem-1.2}-\ref{conlem-1.3}. Next, we prove that when $F$ has empty interior, there does exist a compact set $K$ lying in a line in $\R^2$ such that the complement of $F\cup K$ has at least one bounded connected component, see Proposition \ref{pro-1.4}. Finally, Theorem \ref{conthm-1.1} follows by combining Lemma \ref{conlem-1.3} and Proposition \ref{pro-1.4}.

The paper is organized as follows. In Section \ref{section2},  we give some preliminary lemmas. Then we prove Theorem \ref{conthm-1.1} in Section \ref{secconnected}. Finally, in Section \ref{secexamples} we present examples to show that the assumptions in Theorem \ref{conthm-1.1} can not be further relaxed.

\section{Preliminary lemmas}\label{section2}
For  $A\subset \R^d$, let $A^c$, $A^o$, $\partial A$ and $\overline{A}$  denote respectively the complement,  interior,  boundary and closure of $A$.  We first prove a useful lemma.

\begin{lem}
\label{conlem-1.2}
Let $E\subset\R^d$ be a  connected set with cardinality greater than $1$. Let $F\subset \R^d$ be a compact set so that $F^{c}$ has at least one bounded connected component. Then the arithmetic sum $E+F$ has non-empty interior.
\end{lem}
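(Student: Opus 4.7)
The plan is to exhibit a concrete non-empty open subset of $E+F$. Fix two distinct points $e_1, e_2 \in E$, which is possible since $|E|>1$, and let $U$ be a bounded connected component of $F^c$ furnished by the hypothesis. The candidate I would try is
\[
W := (e_1 + U) \setminus (e_2 + \overline{U}),
\]
which is open as the intersection of the open set $e_1+U$ with the open set $\R^d\setminus(e_2+\overline{U})$. It then suffices to prove (a) $W\neq\emptyset$ and (b) $W\subset E+F$.

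For (a), after translating by $-e_1$ the question becomes whether $U \not\subset v + \overline{U}$ holds for $v := e_2 - e_1 \neq 0$. If instead $U \subset v + \overline{U}$, taking closures gives $\overline{U} \subset v + \overline{U}$, i.e.\ $\overline{U} - v \subset \overline{U}$; iterating, $\overline{U} - nv \subset \overline{U}$ for every $n \geq 1$. Choosing any $x \in \overline{U}$, the points $x-nv$ would all lie in $\overline{U}$ while escaping to infinity, contradicting the boundedness of $\overline{U}$.

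For (b), I would argue by contradiction. Suppose $p \in W$ but $p \notin E + F$. The latter means $(p - F) \cap E = \emptyset$, so $E$ is contained in the open set $p - F^c$, whose connected components are precisely the translates $p - C$ as $C$ ranges over the components of $F^c$. Since $E$ is connected, $E \subset p - C_0$ for a single component $C_0$; using $e_1 \in E$ together with $p - e_1 \in U$ (which holds because $p \in e_1 + U$) forces $C_0 = U$. But then $e_2 \in E \subset p - U$ gives $p - e_2 \in U \subset \overline{U}$, i.e.\ $p \in e_2 + \overline{U}$, contradicting $p \in W$.

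The only non-routine move is spotting the right candidate $W$; its asymmetric definition is calibrated so that the boundedness of $U$ supplies the non-emptiness of $W$ via the iterate-and-escape trick, while the connectedness of $E$ is exactly what funnels $E$ into a single complementary component of $p - F$ and triggers the final contradiction. I do not anticipate any further technical obstacle: the argument uses only the compactness of $F$ and the existence of a bounded complementary component, and it is dimension-free.
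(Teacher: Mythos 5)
Your proof is correct, and while it runs on the same engine as the paper's argument (exhibit an explicit non-empty open set $W$ and show $W\subset E+F$ by noting that $p\notin E+F$ traps $p-E$ inside $F^{c}$, where connectedness of $E$ then yields a contradiction), the construction of $W$ and the non-emptiness argument are genuinely different. The paper first reduces to the case $F^{\circ}=\emptyset$, splits $F^{c}$ into the unbounded component $U$ and the union $V$ of all bounded components (bounded because $F$ is compact), takes $W=\{x:\exists\,a,b\in E,\ x\in(a+U)\cap(b+V)\}$, and proves $W\neq\emptyset$ by a linear-functional argument: a small ball near the supremum of $\langle\cdot,s\rangle$ on $V$, translated by $s$, leaves $V$, and the empty-interior hypothesis forces it to meet $U$. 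You instead fix two points $e_1\neq e_2$ of $E$, work with a single bounded component $U$, set $W=(e_1+U)\setminus(e_2+\overline{U})$, and obtain $W\neq\emptyset$ from the iteration $\overline{U}-nv\subset\overline{U}$ contradicting boundedness; the inclusion $W\subset E+F$ then follows because $E$, being connected, must lie in one component of $p-F^{c}$, which your choice of $W$ pins to be $p-U$ and simultaneously forbids via $p\notin e_2+\overline{U}$. What your route buys: no case split on $F^{\circ}$, no quantification over all pairs $a,b\in E$, and compactness of $F$ is used only through closedness (a bounded complementary component of a closed set already suffices), so your statement is in fact marginally more general; what the paper's route buys is the two-piece decomposition $U\cup V$ whose translated-ball argument is the template reused in the more delicate Lemma 2.4.
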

\begin{proof}
We assume that the interior of $F$ is empty, otherwise there is nothing left to prove. Let $U$ be the unbounded connected component of $F^c$. Write $V=F^c\backslash U$. By the assumption that $F^c$ has at least one bounded connected component, $V$ is a non-empty bounded open subset of $\R^d$. Set
\begin{equation}
\label{e-0}W=\{x\in \R^d:\; \exists a, b\in E  \mbox{ such that }x\in (a+U)\cap (b+V)\}.
\end{equation}
Clearly $W$ is open. We claim that $W\neq\emptyset$. To prove this claim, it suffices to show that $(s+V)\cap U\neq \emptyset$ for each non-zero $s\in \R^d$.  To this end, fix a
non-zero $s\in \R^d$ and define $P_s:\R^d\to \R$ by
$P_s(x)=\langle x,s \rangle$, where $\langle\cdot , \cdot\rangle$ is the standard inner product in $\R^d$. Since $V$ is bounded, $$\lambda:=\sup_{x\in V}P_s(x)<\infty.$$
Pick $x_0\in V$ so that $P_s(x_0)>\lambda-\|s\|^2/2$, and take a small $r\in (0, \|s\|/2)$ so that $B^{o}(x_0, r)\subset V$, where $B^{o}(x_0, r)$ stands for the open ball centered at $x_0$ of radius $r$. Then for each $y\in \R^d$ with $\|y-x_0\|<r$,
\begin{eqnarray*}
\langle s+y,s\rangle &=& \langle s+x_0,s\rangle +\langle y-x_0,s\rangle\\
&\geq &  \langle x_0,s\rangle +\|s\|^2 -\|y-x_0\|\cdot \|s\|\\
&\geq &  \langle x_0,s\rangle +\|s\|^2/2\\
&>& \lambda,
\end{eqnarray*}
which implies that $s+y\not\in V$, and so $s+B^{o}(x_0, r)\subset V^c$.  Since $F$ has empty interior, it follows that $(s+B^{o}(x_0, r))\cap F^c\neq \emptyset$, equivalently, $(s+B^{o}(x_0, r))\cap (U\cup V)\neq \emptyset$. Since $s+B^{o}(x_0, r)\subset V^c$, we get $(s+B^{o}(x_0, r))\cap U\neq \emptyset$, so $(s+V)\cap U\neq \emptyset$, as desired. This proves $W\neq \emptyset$.

Finally, we prove that $W\subset E+F$, which immediately implies that $E+F$ has non-empty interior. Suppose this is not true, i.e., there exists $x\in W$ so that $x\not\in E+F$.  By the definition of $W$, there exist $a, b\in E$ so that
\begin{equation}
\label{e-1} x\in (a+U)\cap (b+V).
\end{equation}
Since $x\not\in E+F$, it follows that 
\begin{equation}
\label{e-to1}
(x-E)\subset F^c=U\cup V.
 \end{equation}
 However, according to \eqref{e-1}, $(x-E)\cap U\supset \{x-a\}$ and  $(x-E)\cap V\supset \{x-b\}$. This together with \eqref{e-to1} implies that $x-E$ is not connected, leading to a contradiction.
\end{proof}
\bigskip

The next lemma is a refined version of Lemma \ref{conlem-1.2}.

\begin{lem}
\label{conlem-1.3}
Let $E\subset \R^d$ be a connected set with cardinality greater than $1$. Let $F\subset \R^d$ be compact. Suppose that there exists a compact set $K\subset \R^d$ so that the following two properties hold:
\begin{itemize}
\item[(i)] $K$ is contained in a hyperplane.
\item[(ii)] $(F\cup K)^c$ has at least one bounded connected component.
\end{itemize}
Then the arithmetic sum $E+F$ has non-empty interior.
\end{lem}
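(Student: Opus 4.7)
The plan is to apply Lemma~\ref{conlem-1.2} to $E$ and the compact set $\widetilde F := F \cup K$, whose complement has a bounded connected component by hypothesis~(ii). This produces a non-empty open set
\[
W \;:=\; (E+U) \cap (E+V) \;\subset\; E + \widetilde F \;=\; (E+F) \cup (E+K),
\]
where $U$ and $V$ denote the unbounded and the union of bounded components of $(F\cup K)^c$, respectively. I would first claim that $W \setminus (E+K) \subset E+F$: for $x \in W$ with $(x-E) \cap K = \emptyset$, if further $(x-E) \cap F = \emptyset$, then $x - E \subset F^c \cap K^c = (F\cup K)^c = U \sqcup V$, so $x-E$ would be a connected set meeting both components (by the definition of $W$), giving the same contradiction as at the end of the proof of Lemma~\ref{conlem-1.2}. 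The task therefore reduces to exhibiting an open subset of $W$ that is disjoint from $\overline{E+K}$.

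Here hypothesis~(i) is decisive. Let $n$ be a unit normal to $H$, set $p(y) := \langle y, n\rangle$, and let $c$ be the value of $p$ on $K$. Since $K$ is compact, $\overline{E+K} = \overline E + K \subset p^{-1}(\overline{p(E)} + c)$, so any $x_0 \in W$ with $p(x_0) \notin \overline{p(E)} + c$ lies in $W \setminus \overline{E+K}$; a whole open neighbourhood of such an $x_0$ in $W$ is then contained in $E+F$, giving the desired interior point.

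To produce such an $x_0$, I would imitate the proof of Lemma~\ref{conlem-1.2} with the auxiliary linear functional $P_s$ there replaced by $p$. Assume first that $p(E)$ is a non-trivial interval (so $E$ is not contained in a hyperplane parallel to $H$); pick $a, b \in E$ with $p(a) > p(b)$ and set $s := a - b$, so that $p(s) > 0$. Re-running the argument of Lemma~\ref{conlem-1.2} while maximizing $p$ rather than $P_s$ over $V$ produces $v \in V$ with $p(v)$ arbitrarily close to $\sup p(V)$ such that $s+v \in U$, yielding $x_0 := a + v \in W$ with $p(x_0) = p(a) + p(v)$ close to $p(a) + \sup p(V)$. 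Because $V$ is open in $\R^d$, $p(V)$ is a non-trivial open interval, so after possibly replacing $n$ by $-n$ we may arrange $\sup p(V) > 0$; then, provided $\sup p(E) < \infty$, choosing $p(a)$ sufficiently close to $\sup p(E)$ makes $p(x_0) > \sup p(E)$, as required. In the degenerate case in which $E$ is contained in a hyperplane parallel to $H$, the set $E+K$ lies in a single hyperplane and is nowhere dense in $\R^d$, so $W \setminus \overline{E+K}$ is open and non-empty because an open non-empty set cannot be covered by a nowhere dense one.

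The main obstacle I anticipate is the case in which $p(E)$ is unbounded on the same side as the non-zero sign of $p(V)$, so the direct $p$-escape (and its reflection) is blocked. To address it, I expect one must localize the construction: because a hyperplane cannot enclose a bounded open set in $\R^d$ for $d \geq 2$, the set $\partial V \cap F \setminus H$ is non-empty, and one may restrict the choice of $v$ to a small ball inside $V$ near a point of $\partial V \cap F \setminus H$. This keeps the construction at positive distance from $K$, and the unboundedness of $E$ in the $n$-direction should then force the translates $x - E$ to meet $F$ rather than $K$, yielding the required open subset of $E+F$.
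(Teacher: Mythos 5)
Your reduction is sound up to a point: applying Lemma \ref{conlem-1.2} to $F\cup K$ (which one may assume has empty interior) does produce a non-empty open set $W\subset E+(F\cup K)$, and the observation $W\setminus(E+K)\subset E+F$ is correct. The gap is the final step. To extract an interior point of $E+F$ you need a non-empty \emph{open} subset of $W$ disjoint from $E+K$, i.e.\ $W\setminus\overline{E+K}\neq\emptyset$, and this is impossible precisely in the situation the lemma is needed for. Take $d=2$, let $E$ be the connected, dense graph of a discontinuous additive function (this is exactly the kind of $E$ to which Theorem \ref{conthm-1.1} is applied in Section 4), and let $K$ be a segment. Then $\overline{E}=\R^2$, so $\overline{E+K}=\R^2$ and, for every linear functional $p$, the set $\overline{p(E)}+c$ is all of $\R$; no open subset of $W$ avoids $\overline{E+K}$, so neither the ``$p$-escape'' nor its reflection can ever succeed. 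Working with $E+K$ instead of its closure does not help: $E+K$ is then dense with (possibly) empty interior, so $W\setminus(E+K)$, although non-empty, need not contain any open set, and the inclusion $W\setminus(E+K)\subset E+F$ yields no interior point. Your closing paragraph acknowledges this obstacle, but the proposed fix is not an argument: choosing $v$ near a point of $\partial V\cap F\setminus H$ only controls the single point $x-a$ of the translate $x-E$; it provides no mechanism for ruling out, simultaneously for \emph{all} $x$ in some open set, the possibility that the connected set $x-E$ leaves $U\cup V$ only through $K$ and never meets $F$.

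The paper's proof avoids this single-translate escape altogether. It argues by contradiction: if $(E+F)^{\circ}=\emptyset$ then $D:=(E+F)^c$ is dense, and Lemma \ref{conlem-1.4} shows that $A=\bigcap_{z\in D}(z-F)^c$ is totally disconnected, contradicting $E\subset A$. Total disconnectedness is proved by separating any $u\neq v$ in $A$ with an open set of the form $(z-V)\setminus\bigcup_{i=1}^k(z_i-\overline V)$, where $V$ is the upper-half-space part of a bounded component of $(F\cup K)^c$, $z,z_1,\dots,z_k\in D$, and the $z_i$ are taken at heights $\Pi(z_i)>\Pi(u)+h$ so that the sets $z_i-V$ cover $z-\tilde K$ while staying above $\overline{(z-V)}$; consequently the boundary of the separating set is contained in translates of $F$ alone, which $A$ misses by definition. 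It is this use of \emph{many} translates drawn from the dense set $D$, rather than one escape direction, that handles dense or unbounded $E$, and it is the ingredient your sketch is missing.
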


Lemma \ref{conlem-1.3} is a direct consequence of the following result.

\begin{lem}\label{conlem-1.4}
Let $F, K\subset \R^d$ be as in Lemma \ref{conlem-1.3}. Let $D$ be a dense subset of $\R^d$. Then the set 
\[A:=\bigcap_{z\in D}(z-F)^c\]
is totally disconnected.
\end{lem}

\begin{proof}
By taking a suitable rotation and translation if necessary, we may assume that $K$ is contained in the linear subspace $\{(x_1,\ldots,x_d)\in \R^d: x_d=0\}$, and that at least one  bounded connected component of $(F\cup K)^c$  has non-empty intersection with the half-space $H:=\{(x_1,\ldots, x_d)\in \R^d: x_d>0\}$.

Let $U$ be a bounded connected component of $(F\cup K)^c$ so that 
\[V:=U\cap H\]
is non-empty. Since  $\partial U\subset F\cup K$ and  $K\subset \partial H$, we easily see that  
\begin{align*}
\partial V\subset F\cup\left(\overline{U}\cap \partial H\right).
\end{align*}
Write $\tilde{K}=\overline{U}\cap \partial H$. Then  $\tilde{K}$ is a compact subset of the linear subspace $\partial H=\{(x_1,\ldots,x_d)\in\R^d: x_d=0\}$ and 
\begin{equation}\label{eqparV}
\partial V\subset F\cup \tilde{K}.
\end{equation}
Set $$h:=\sup\Pi(V),$$
 where $\Pi: \R^d\to\R$ is  the mapping $(x_1, \ldots, x_d)\mapsto x_d$. Then $h$ is positive and finite.

To prove that  $A$ is totally disconnected,  we may assume that $\#A\geq 2$, since otherwise there is  nothing to prove.  
Let $u,v\in A$ with $u\neq v$. In the following, we are going to construct an open set $W\subset \R^d$ such that 
\begin{equation}\label{eq1211423}
u\in W, \  v\not\in \overline{W} \ \text{ and } \ \partial W\cap A=\emptyset. 
\end{equation}
Since $u,v\in A$ with $u\neq v$ are arbitrary,  it will follow  that $A$ is totally disconnected. 

To prove \eqref{eq1211423}, first notice that  the open set  $(u+V)\setminus (v+\overline{V})$ is  non-empty. Indeed, since $u\neq v$ and  $(u+V)\setminus (v+\overline{V})=\left(\left(u-v+V\right)\setminus \overline{V}\right)+v$, it suffices to show that  $(a+V)\setminus \overline{V}\neq \emptyset$ for any  non-zero $a\in \R^d$.  To this end, fix  $a\in \R^d$ with $a\neq 0$.  Define $$\lambda:=\sup_{x\in V}\langle x, a\rangle.$$
where $\langle\cdot, \cdot\rangle$ is the standard inner product in $\R^d$. Then  $\lambda$ is finite as  $V$ is bounded. Also, it is clear that $\lambda=\sup_{x\in\overline{V}}\langle x, a\rangle$.   Since $a\neq 0$, we have $\langle a, a\rangle>0$. Hence we can find  $x_0\in V$  such that 
\[\langle x_0, a\rangle>\lambda-\langle a,a\rangle.\]
Thus we have
\[\langle a+x_0,a\rangle=\langle x_0,a\rangle+\langle a,a\rangle>\lambda,\]
which implies that  $a+x_0\in (a+V)\setminus \overline{V}$. Hence we see that $(u+V)\setminus (v+\overline{V})$ is a non-empty open set. By the density of  $D$, we can pick a point $z\in D\cap \left[(u+V)\setminus (v+\overline{V})\right]$. Then we have  
\begin{equation}\label{equv}
u\in z-V \ \text{ and } \  v\notin z-\overline{V}.
\end{equation}

Let \[\tilde{D}=\left\{x\in D: \Pi(x)>\Pi(u)+h\right\}.\]
Since $D$ is dense in $\R^d$, $\tilde{D}$ is clearly dense in  the open half-space
\begin{equation}\label{eq1211452}
\left\{x\in \R^d: \Pi(x)>\Pi(u)+h\right\}.
\end{equation} 
 Let $x\in \R^d$ with $\Pi(x)>\Pi(u)$. Since $x+V$ is open and
\[\sup\Pi(x+V)=\Pi(x)+h>\Pi(u)+h,\]
we  see from \eqref{eq1211452} that   $(x+V)\cap \tilde{D}\neq \emptyset$. Equivalently, $x\in \tilde{D}-V$. Hence we have 
\begin{equation}\label{eq:strip}
     \tilde{D}-V\supset\left\{x\in \R^d: \Pi(x)>\Pi(u)\right\}.
\end{equation}

Since $\tilde{K}$ is contained in the linear subspace $\{(x_1,\ldots, x_d)\in \R^d: x_d=0\}$,   $V$ is contained in the open half-space $\{(x_1,\ldots, x_d)\in \R^d: x_d>0\}$ and $z\in u+V$, we have 
\begin{equation}\label{eq:Pi2}
 \Pi(z-\tilde{K})=\{\Pi(z)\} \ \text{ and } \ \Pi(z)>\Pi(u). 
\end{equation}
Then by \eqref{eq:strip}, \eqref{eq:Pi2} and the compactness of $z-\tilde{K}$,  we can find finitely many points $z_1,\ldots, z_k\in \tilde{D}$ so that 
\begin{equation}\label{eq:z-L}
z-\tilde{K}\subset \bigcup_{i=1}^k(z_i-V).
\end{equation}
Set 
\begin{equation}\label{eq:def U}
W:=(z-V)\setminus \left(\bigcup_{i=1}^k \left(z_i-\overline{V}\right)\right).
\end{equation}
See Figure \ref{figV} for an illustration of the definition of $W$, where for simplicity we assume that $V$ is  an open half disk. 
Below we show that the open set $W$ satisfies \eqref{eq1211423}.

\begin{figure}
\centering
\begin{tikzpicture}
\draw[->] (0,-1)--(0,5);
\draw[->] (-1.5,0)--(5,0);
\draw  (0,0)--(0:1) arc (0:180:1)--cycle; 
\draw  (2,2.5) arc [radius=1, start angle=180, end angle=360]--cycle;
\draw  (1.5,3) arc [radius=1, start angle=180, end angle=360]--cycle;
\draw  (2,3.3) arc [radius=1, start angle=180, end angle=360]--cycle;
\draw  (2.5,2.9) arc [radius=1, start angle=180, end angle=360]--cycle;
\draw [ultra thick] (-1,0)--(1,0);
\draw [ultra thick] (2,2.5)--(4,2.5);
\draw [ultra thick] (1.5,3)--(3.5,3);
\draw [ultra thick] (2,3.3)--(4,3.3);
\draw [ultra thick] (2.5,2.9)--(4.5,2.9);
\draw[fill] (2.65,1.8) circle [radius=0.025];
\draw[fill] (4.5,2) circle [radius=0.025];

\node [left] at (0, 5) {$y$};
\node [below] at (5, 0) {$x$};
\node  at (-0.5, 0.5) {$V$};
\node[below]  at (-0.5, 0) {$\tilde{K}$};
\node[left]  at (2.73, 1.8) {$u$};
\node[below] at (4.5, 2) {$v$};
\node[right]  at (2.8, 1.7) {$W$};

\node[below]  at (3, 1.5) {$z-V$};
\node[left]  at (1.5, 3) {$z_1-V$};
\node[above]  at (3, 3.3) {$z_2-V$};
\node[below] at (5.1, 2.8) {$z_3-V$};

\end{tikzpicture}
\caption{}
\label{figV}
\end{figure}

First notice that $v\notin \overline{W}$ since $v\notin z-\overline{V}$; see \eqref{equv}. Moreover,
since $z_1,\ldots,z_k\in \tilde{D}$, we have 
\[\inf\Pi\left(\bigcup_{i=1}^k\left(z_i-\overline{V}\right)\right)=\min_{1\leq i\leq k}\left(\Pi(z_i)-h\right)>\Pi(u),\]
which implies that 
$$u\notin \bigcup_{i=1}^k\left(z_i-\overline{V}\right).$$
Since $u\in z-V$, it follows that $u\in W$.  In the following, we show that $\partial W\cap A=\emptyset$.

To see this,  observe that by \eqref{eqparV}  and  \eqref{eq:def U} we have 
\begin{equation}\label{eq:partical U}
 \partial W\subset (z-F)\cup (z-\tilde{K})\cup\left(\bigcup_{i=1}^k(z_i-F)\right)\cup\left(\bigcup_{i=1}^k(z_i-\tilde{K})\right).  
\end{equation}
By \eqref{eq:z-L}, \eqref{eq:def U} and the compactness of $z-\tilde{K}$, we see that $W$ is disjoint from a neighborhood of $z-\tilde{K}$. In particular, this implies that 
\begin{equation}\label{eq: U cap z-L}
    \partial W\cap (z-\tilde{K})=\emptyset.
\end{equation}
 Next we show that 
\begin{equation}\label{eq: partical U cap}
    \partial W\cap \left(\bigcup_{i=1}^k (z_i-\tilde{K})\right)=\emptyset.
\end{equation}
To see this,   since $\tilde{K}\subset \{(x_1,\ldots,x_d)\in \R^d: x_d=0\}$, we have for $i\in \{1,\ldots,k\}$,
\begin{equation}\label{eq:Pi2(zi-L)}
\Pi(z_i-\tilde{K})=\{\Pi(z_i)\}.
\end{equation}
Moreover, since $z_1,\ldots, z_k\in \tilde{D}$, we see that  
\begin{equation}\label{eq1211633}
\Pi(z_i)>h+\Pi(u), \ \ i=1,\ldots,  k.
\end{equation}
On the other hand,  since $V\subset \{(x_1,\ldots,x_d)\in\R^d: x_d>0\}$  and  $z\in u+V$,  
\begin{equation}\label{eq:h+Pi2(u)}
    h+\Pi(u)\geq \Pi(z)\geq \sup\Pi(z-\overline{V})\geq \sup\Pi\left(\overline{W}\right),
\end{equation}
where the last inequality is by \eqref{eq:def U}. 
Now \eqref{eq:Pi2(zi-L)}-\eqref{eq:h+Pi2(u)} imply that for  $i\in \{1,\ldots, k\}$, 
\[\inf\Pi(z_i-\tilde{K})=\Pi(z_i)>\sup\Pi(\partial W).\]
From this  \eqref{eq: partical U cap} follows.

 By \eqref{eq:partical U}-\eqref{eq: partical U cap} we see that 
 \begin{equation}\label{eqUsub}
 \partial W\subset (z-F)\cup\left(\bigcup_{i=1}^k(z_i-F)\right).
 \end{equation}
 Since $z, z_1,\ldots, z_k\in D$, the definition of $A$ implies that $A$ has  no intersection with the right hand side of \eqref{eqUsub}. As a consequence, $A\cap \partial W=\emptyset$. Hence  \eqref{eq1211423} is proved and we finish the proof of the lemma. For an illustration of the proof, see Figure \ref{figV}. 
 \end{proof}

Now we deduce  Lemma \ref{conlem-1.3} from Lemma \ref{conlem-1.4}.
\begin{proof}[Proof of Lemma \ref{conlem-1.3}]
We prove the lemma by contradiction. Suppose that $(E+F)^{\circ}=\emptyset$. Then we have, equivalently,  that the set \[D:=(E+F)^c\]
is dense in $\R^d$.  Hence by Lemma \ref{conlem-1.4},  $\bigcap_{z\in D}(z-F)^c$ is totally disconnected. However, from the definition of $D$ we see that 
\begin{equation}\label{eqEsubset}
E\subset \bigcap_{z\in D}(z-F)^c.
\end{equation}
This contradicts the assumption that $E$ is a connected set with cardinality greater than $1$. Hence we have  $(E+F)^{\circ}\neq \emptyset$, completing the  proof of the lemma. 
\end{proof}

\section{Proof of Theorem \ref{conthm-1.1}}\label{secconnected}

We first state a classical result in convex analysis. The reader is referred to \cite[Theorem 17.1]{Rockafellar1970} for a proof.

\begin{thm}[Carath\'{e}odory's Theorem] Let $S\subset \R^d$ and let ${\rm conv}(S)$ denote the convex hull of $S$. Then any $x\in {\rm conv}(S)$ can be represented as a convex combination of $d+1$ elements of $S$.
\end{thm}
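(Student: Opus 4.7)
The plan is to prove this classical result by a minimality argument combined with a linear-dependence count in $\R^d$. Given $x \in \mathrm{conv}(S)$, I would let $n$ denote the smallest integer for which $x$ admits a representation
\[ x = \sum_{i=1}^n \lambda_i x_i, \qquad x_i \in S, \ \lambda_i > 0, \ \sum_{i=1}^n \lambda_i = 1, \]
and the goal is to show $n \leq d+1$.

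Assuming for contradiction that $n \geq d+2$, I would observe that the $n-1$ vectors $x_2 - x_1, \ldots, x_n - x_1$ lie in $\R^d$ and number at least $d+1$, so they must be linearly dependent. This yields scalars $\mu_2, \ldots, \mu_n$, not all zero, with $\sum_{i=2}^n \mu_i (x_i - x_1) = 0$; setting $\mu_1 := -\sum_{i=2}^n \mu_i$, one then obtains
\[ \sum_{i=1}^n \mu_i = 0 \quad \text{and} \quad \sum_{i=1}^n \mu_i x_i = 0, \]
with $(\mu_1, \ldots, \mu_n) \neq 0$. Because these coefficients are real, sum to zero, and are not all zero, at least one $\mu_i$ must be strictly positive.

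I would then consider the one-parameter family $\lambda_i(t) := \lambda_i - t \mu_i$, which preserves both $\sum_i \lambda_i(t) = 1$ and $\sum_i \lambda_i(t) x_i = x$ for every real $t$. Choosing $t^* := \min\{\lambda_i / \mu_i : \mu_i > 0\} > 0$ keeps all $\lambda_i(t^*)$ non-negative while forcing at least one coefficient to vanish. Discarding that vanishing index yields a convex-combination representation of $x$ using at most $n-1$ elements of $S$, contradicting the minimality of $n$.

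The argument is essentially routine linear algebra, so I do not anticipate a serious obstacle. The only step requiring a brief justification is the existence of a strictly positive $\mu_i$, which follows because a nonzero real vector whose entries sum to zero must possess entries of both signs. The rest is a mechanical verification that the substitution $\lambda_i \mapsto \lambda_i - t^* \mu_i$ is well defined and strictly reduces the size of the support.
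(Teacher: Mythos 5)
Your argument is the standard support-reduction proof of Carath\'{e}odory's theorem and it is correct: starting from a minimal-length convex representation, the affine dependence of any $d+2$ points of $\R^d$ produces coefficients $\mu_i$ summing to zero with $\sum_i \mu_i x_i = 0$, and the perturbation $\lambda_i \mapsto \lambda_i - t^*\mu_i$ with $t^* = \min\{\lambda_i/\mu_i : \mu_i > 0\}$ kills one coefficient while preserving the convex combination, contradicting minimality. The only cosmetic gap is that you obtain a representation with \emph{at most} $d+1$ points, whereas the statement asks for exactly $d+1$; this is closed by padding with zero coefficients or repeated points. Note that the paper itself gives no proof of this theorem --- it is quoted as a classical fact with a reference to Rockafellar \cite[Theorem 17.1]{Rockafellar1970} --- so there is no in-paper argument to compare against; your self-contained proof is the textbook one and would serve as a drop-in justification if one wanted the paper to be self-contained on this point.
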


Now we apply the above theorem to prove the following  fact.
\begin{lem}
\label{lem-1.4}
Let $S$ be a compact subset of $\R^2$.  Suppose that $x\in \partial ({\rm conv}(S))\backslash S$. Then there exist $u, v\in S$ with $u\neq v$ such that $x$ is contained in the line segment connecting $u,v$, and moreover, $S$ lies completely on one side of the line passing through $u, v$.
\end{lem}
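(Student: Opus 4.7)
The plan is to combine the supporting hyperplane theorem with Carath\'eodory's theorem just stated. First, apply the supporting hyperplane theorem to the compact convex set ${\rm conv}(S)$ at its boundary point $x$: this produces a line $\ell$ through $x$ such that $S\subset{\rm conv}(S)$ lies entirely in one closed half-plane bounded by $\ell$. Writing $\ell=\{y\in\R^2\colon \langle a,y\rangle=c\}$ with $\langle a,s\rangle\le c$ for all $s\in S$, this $\ell$ will be the line in the conclusion, so only the existence of two suitable points $u,v\in S\cap\ell$ remains to be shown.

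By Carath\'eodory's theorem in $\R^2$, I can write $x=\sum_{i=1}^{3}\lambda_i s_i$ with $s_i\in S$, $\lambda_i\ge 0$, and $\sum_i\lambda_i=1$. Applying the linear functional $\langle a,\cdot\rangle$ to this identity gives
\[c=\langle a,x\rangle=\sum_i \lambda_i\langle a,s_i\rangle\le c,\]
and since each summand satisfies $\langle a,s_i\rangle\le c$, equality forces $\langle a,s_i\rangle=c$, that is, $s_i\in\ell$, for every index $i$ with $\lambda_i>0$. Hence $x$ is a convex combination of finitely many points of $S\cap\ell$.

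Finally, parametrize $\ell$ by a real coordinate $t$, and let $t_x, t_1,\ldots,t_k$ denote the parameters of $x$ and of those $s_i$ with $\lambda_i>0$. Then $t_x=\sum \lambda_i t_i$ lies in the closed interval $[\min_i t_i,\max_i t_i]$. If $t_x$ equaled either extreme value, the convex combination would be forced to concentrate on a single $s_i$, giving $x=s_i\in S$ and contradicting $x\notin S$; so $t_x$ lies strictly between the two extremes. Taking $u$ and $v$ to be points $s_i$ realizing $\min_i t_i$ and $\max_i t_i$ respectively produces two distinct points of $S\cap\ell$ with $x$ in the open segment joining them, and the line through $u$ and $v$ is exactly $\ell$, which already has $S$ on one side by construction. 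I do not anticipate a serious obstacle; the only point requiring some care is verifying $u\neq v$, which is guaranteed by the hypothesis $x\notin S$.
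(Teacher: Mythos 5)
Your proof is correct, but it takes a genuinely different route from the paper. You first invoke the supporting hyperplane theorem at $x\in\partial(\mathrm{conv}(S))$ to obtain the line $\ell$, and only then use Carath\'eodory together with the linear functional $\langle a,\cdot\rangle$ to force all positive-weight points of the representation onto $\ell$; a one-dimensional parametrization argument then yields two distinct points $u,v\in S\cap\ell$ with $x$ strictly between them (the hypothesis $x\notin S$ ruling out the degenerate case, exactly as you note). The paper instead argues in the opposite order and more elementarily: Carath\'eodory places $x$ in a triangle with vertices in $S$; since $x$ is a boundary point of $\mathrm{conv}(S)$ it must lie on an edge of that triangle, whose endpoints serve as $u,v$; and the supporting property of $L_{u,v}$ is then proved by hand, via the contradiction that two points $w_1,w_2\in S$ on opposite sides of $L_{u,v}$ would put $x$ in the interior of a quadrilateral contained in $\mathrm{conv}(S)$. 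What your approach buys is robustness: the functional argument handles collinear (degenerate) Carath\'eodory triples automatically, a case the paper's ``$x$ lies on one edge of the triangle'' step passes over silently, and the structure adapts readily to $\R^d$. What the paper's approach buys is self-containedness at the planar level: it needs no separation/supporting hyperplane machinery beyond Carath\'eodory, only a short picture-level contradiction.
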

\begin{proof}
The result might be  well-known. However we are not able to find a reference, so we simply include a proof.  By Carath\'{e}odory's Theorem, $x$ can be represented as a convex combination of $3$ elements of $S$. Equivalently, $x$ lies in a triangle with vertices in $S$.  Since $x$ is on the boundary of  ${\rm conv}(S)$,  it follows that $x$ lies on one edge of the triangle. Let $u,v\in S$ be the endpoints of this edge. Since $x\not\in S$, we have $u\neq v$.

Let $L_{u,v}$ denote the straight line passing through the points $u, v$. We show that $S$ lies completely on one side of $L_{u, v}$. Suppose on the contrary that $S$ does not lie on one side of  $L_{u,v}$. Then we can pick $w_1, w_2\in S$ such that $w_1, w_2$ lie on different sides of $L_{u,v}$. Clearly, the point $x$ lies in the interior of the quadrilateral with vertices $u,  v, w_1, w_2$ (see Figure \ref{fig:1}). However, this quadrilateral is a subset of ${\rm conv}(S)$, contradicting  the assumption that $x\in \partial ({\rm conv}(S))$.
\end{proof}

\begin{figure}
    \centering
    \begin{tikzpicture}[scale=1.5]
    \draw (0,0)--(6,3);
    \draw[fill] (4,2) circle [radius=0.025];
    \node [below] at (4,2) {$x$};
        \draw[fill] (4,2) circle [radius=0.025];
    \draw[fill] (5,2.5) circle [radius=0.025];
    \node [right,below] at (5,2.5) {$v$};
    \draw[fill] (1.6,0.8) circle [radius=0.025];
    \node [below] at (1.6,0.8) {$u$};
    \draw[fill] (1.8,2.8) circle [radius=0.025];
    \node [left] at (1.8,2.8) {$w_1$};
    \draw[fill] (4.4,1.1) circle [radius=0.025];
    \node [right] at (4.4,1.1) {$w_2$};
    \draw (5,2.5)--(1.8,2.8);
    \draw (1.6,0.8)--(1.8,2.8);
    \draw  (1.6,0.8)--(4.4,1.1);
    \draw (4.4,1.1)--(5,2.5);
    \node[above] at (0.1,0.1){$L_{u,v}$};
    \end{tikzpicture}
    \caption{}
    \label{fig:1}
\end{figure}

Next we  prove the following proposition, which plays a key role in the proof of Theorem \ref{conthm-1.1}.

\begin{prop}
\label{pro-1.4}
Let $F$ be a compact connected subset of $\R^2$ with empty interior. Suppose that $F$ is not lying in a line. Then there exists a compact set $K\subset \R^2$ lying in a line  such that  $(F\cup K)^c$ has at least one bounded connected component.
\end{prop}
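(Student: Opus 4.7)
The plan is to study the convex hull $C := \operatorname{conv}(F)$, which has non-empty interior since $F$ is not contained in a line, so $\partial C$ is a Jordan curve in $\R^2$. The argument splits into two cases according to whether $\partial C \subseteq F$ or not.

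If $\partial C \subseteq F$, then $F$ already separates $\R^2$: because $F$ has empty interior while $C^o \neq \emptyset$, there is some $q \in C^o \setminus F$, and its connected component in $F^c$ cannot cross the Jordan curve $\partial C \subseteq F$, so it is trapped inside $C^o$ and is bounded. Any singleton $K = \{p\}$ with $p \in F$, trivially lying in a line, then satisfies the conclusion because $F \cup K = F$.

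Otherwise, pick $x \in \partial C \setminus F$. Lemma~\ref{lem-1.4} provides $u, v \in F$ with $u \neq v$, $x \in [u, v]$, and $F$ lying on one side of the line $L := L_{u,v}$. The closed subset $F \cap [u, v]$ of $[u, v]$ contains the endpoints $u, v$ but misses $x$, so I extract the maximal open sub-interval $(u', v') \subseteq [u, v] \setminus F$ containing $x$, with $u', v' \in F$. Setting $K := [u', v']$ gives a compact segment lying in $L$ with $F \cap K = \{u', v'\}$.

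The main obstacle is then showing that $(F \cup K)^c$ has a bounded connected component in this second case. My plan is to apply the Mayer--Vietoris exact sequence in \v{C}ech cohomology to the closed cover $\{F, K\}$: since $F$, $K$, and $F \cup K$ are all connected while $F \cap K = \{u', v'\}$ has two components and $\check{H}^1(K) = 0$, the sequence forces $\operatorname{rank} \check{H}^1(F \cup K) \geq 1$. Alexander duality in $S^2 = \R^2 \cup \{\infty\}$ then yields at least one bounded component of $\R^2 \setminus (F \cup K)$, finishing the proof. An alternative, more elementary route would exploit the one-sidedness $F \cup K \subseteq \overline{H^+}$ from Lemma~\ref{lem-1.4} together with a Janiszewski / Phragm\'en--Brouwer-type argument in the half-plane, but the cohomological route is cleaner.
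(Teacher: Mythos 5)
Your proposal is correct, and its key step differs genuinely from the paper's. The first case ($\partial\,{\rm conv}(F)\subseteq F$) is only a repackaging: the paper reduces to the situation where $F^c$ has no bounded component and then shows $\partial\,{\rm conv}(F)\not\subset F$, which is the contrapositive of your direct argument that $\partial\,{\rm conv}(F)\subseteq F$ forces a bounded component of $F^c$ inside ${\rm conv}(F)^{o}$. The real divergence is in the second case. The paper takes $K=[-R,R]\times\{0\}$, a long segment covering the projection of $F$ onto the supporting line from Lemma \ref{lem-1.4}, and then proves boundedness of the component above the origin by an explicit, elementary construction: a polygonal path $\eta$ joined with two vertical rays $L^{\pm}$ that would separate $u$ from $v$ and contradict the connectedness of $F$. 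You instead shrink $K$ to the gap segment $[u',v']$, so that $F\cap K=\{u',v'\}$ is exactly two points, and invoke the Mayer--Vietoris sequence in \v{C}ech cohomology plus Alexander duality in $S^2$ to conclude that $F\cup K$ separates the plane; since $F\cup K$ is compact, some complementary component is bounded. This is essentially Janiszewski's second theorem (two continua with disconnected intersection separate the plane), and the cohomological bookkeeping is right: $F$, $K$, $F\cup K$ connected and $F\cap K$ two points force $\check{H}^1(F\cup K)\neq 0$, hence at least two components of $S^2\setminus(F\cup K)$, hence a bounded component in $\R^2$. What each approach buys: the paper's argument is longer but completely elementary and self-contained, using only Jordan-curve--type reasoning with polygons; yours is shorter and cleaner but rests on citing \v{C}ech Mayer--Vietoris and Alexander duality (or Janiszewski). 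A side benefit of your route is that the one-sidedness provided by Lemma \ref{lem-1.4} is never used—any chord with endpoints in $F$ whose open part meets $F^c$ suffices, and such a chord exists simply because a compact connected set with empty interior not lying in a line cannot be convex—so your proof could even bypass the convex-hull analysis entirely.
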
 
\begin{proof}
We may assume that $F^c$ has no bounded connected components, otherwise we simply take $K=\emptyset$.

Let ${\rm conv}(F)$ denote the convex hull of $F$. Since $F$ is not contained in  a straight line, ${\rm conv}(F)$ has non-empty interior and there exists a homeomorphism $h:\R^2\to \R^2$ so that $h({\rm conv}(F))$ is the unit closed ball centered at the origin (see e.g. \cite[Exercise 8.11]{BorweinLewis2006}).

We claim that $\partial ({\rm conv}(F))\not\subset F$. Suppose on the contrary that   $\partial ({\rm conv}(F))\subset F$. As $\partial ({\rm conv}(F))$ is homeomorphic to the unit circle, $\R^2\backslash \partial ({\rm conv}(F))$ has exact two connected components $V_1, V_2$ where $V_1$ is unbounded and $V_2$ bounded. Since $\partial ({\rm conv}(F))\subset F$, $F^c\subset V_1\cup V_2$. Since $F$ is compact, $F^c\cap V_1\neq \emptyset$, and so $F^c\subset V_1$ by the connectedness of $F^c$. This implies that $V_2\subset F$, contradicting the assumption that $F$ has empty interior. 

Pick $z\in \partial ({\rm conv}(F))\backslash F$. By Lemma \ref{lem-1.4}, there exist $u,v\in F$ such that the straight line $L_{u,v}$ passes through $z$ and $F$ lies completely on one side of $L_{u,v}$.   By taking suitable rotation and translation to $F$, we may assume that $z=(0,0)$, $L_{u,v}$ is the $x$-axis,  $u$ is on the negative part of the $x$-axis and $v$ the positive part of $x$-axis,  and $F$ lies entirely on the upper half plane.

Choose a large $R>0$ such that $F$ is contained in the closed half disc
$$
S:=\{(x, y)\in \R^2:\; y\geq 0,\; x^2+y^2\leq R^2\}.
$$
Let $K$ be the line segment $[-R, R]\times \{0\}$, which is the bottom edge of $S$. Below we show that $(F\cup K)^c$ has at least one bounded connected component.

Since the origin is not contained in $F$, there exists a small $r>0$ such that the following open half disc
$$
T:=\{(x, y)\in \R^2:\; y> 0,\; x^2+y^2< r^2\}
$$
is contained in $(F\cup K)^c$.  Let $V$ be the connected component of $(F\cup K)^c$ that contains $T$. Notice that $S^c$ is contained in the unbounded connected component $U$ of $(F\cup K)^c$. To show that $V$ is bounded, it is enough to show that $V\neq U$ (keep in mind that $(F\cup K)^c$ has a unique unbounded connected component, due to the compactness of  $F\cup K$).

 Suppose on the contrary that $V=U$. Then $U\supset T\cup S^c$. Pick $a\in T$ and $b\in S^c$. Since $U$ is open and connected, there exists a simple curve $\gamma\subset U$ such that $\gamma$ consists of finitely many line segments and $\gamma$ joins the points $a, b$ (see e.g. \cite[p.~56]{Ahlfors1978} for a proof). Clearly,  $\gamma$ must intersect  the open half circle $$\Gamma:=\{(x,y):\; x^2+y^2=R^2,\; y>0\}$$
  at one or  more than one  points. As $\gamma$ is a polygon, we may choose a sub-polygon $\gamma_1$ which joins $a$ and a point $c \in \Gamma$ such that  $c$ is the unique intersection point of $\gamma_1$ and $\Gamma$. Connect the point $a$ and the origin by a simple polygon $\gamma_2\subset \overline{T}$ such that $\gamma_2$ intersects $\gamma_1$ only at the point $a$, and $\gamma_2$ intersects $K$ only at the origin.

 Let $\eta=\gamma_1\cup \gamma_2$. Then $\eta$ is a simple polygon, joining the origin and the point $c$. Except the endpoints, other points of $\eta$ are contained in $U\cap S^o$. Hence $\eta\cap F=\emptyset$.

 Write $c=(c_1, c_2)$. Let $L^+, L^{-}$ be the vertical half lines defined by $L^+:=\{(c_1, y): y\geq c_2\}$ and $L^-:=\{(0, y): y\leq 0\}$.  Then the union $\eta \cup L^+\cup L^-$ has no intersection with $F$, moreover its complement has two connected components, with $u, v$ being contained in different components. This implies that $F$ is disconnected, leading to a contradiction. See Figure \ref{fig:2} for an illustration of the proof. 
\end{proof}

\begin{figure}
    \centering
    \begin{tikzpicture}[scale=0.8]
    \draw (-4,0)--(4,0);
    \draw[fill] (0,0) circle [radius=0.025];
    \node [below] at (0.2,0.1) {$O$};
    \node [above] at (-0.3,0) {$T$};
     \node [above] at (-2.5,1) {$S$};
      \node [right] at (1.7,1.9) {$\eta$};

    \begin{scope}
        \clip (-4,0) rectangle (4,4);
    \draw (0,0) circle(4);
    \end{scope}
    \begin{scope}
        \clip (-0.9,0) rectangle (0.9,0.9);
    \draw (0,0) circle(0.9);
    \end{scope}
   
    \draw (0,0)--(0.2,0.4)--(0.4,0.3)--(0.8,0.7)--(1,1.6)--(1.6,1.8)--(2.2,2.5)--(2.4,3.2);
     \draw[red](0,-3)--(0,0)--(0.2,0.4)--(0.4,0.3)--(0.8,0.7)--(1,1.6)--(1.6,1.8)--(2.2,2.5)--(2.4,3.2)--(2.4,5.2);
    \draw (2.4,3.2)--(2.5,3.55)--(3,3.7)--(3.5, 4.2);
    \draw[fill] (3.5,4.2) circle [radius=0.025];
    \draw[fill] (0.4,0.3) circle [radius=0.025];
    \draw[fill] (2.4,3.2) circle [radius=0.025];
    \node[right] at (0.4,0.3) {$a$};
    \node[right] at (2.4,3.2) {$c$};
    \node[right] at (3.5, 4.2) {$b$};
    \node[left] at (0,-2.5) {$L^-$};
    \node[left] at (2.4,5) {$L^+$};
    \node[below] at (-2.5,0) {$u$};
    \node[below] at (1.5,0) {$v$};
    \draw[fill] (-2,0) circle [radius=0.025];
    \draw[fill] (1.5,0) circle [radius=0.025];
    \end{tikzpicture}
    
    \caption{}
    \label{fig:2}
\end{figure}

Now we combine Lemma \ref{conlem-1.3} and Proposition \ref{pro-1.4} to prove Theorem \ref{conthm-1.1}. 

\begin{proof}[Proof of Theorem \ref{conthm-1.1}] We can  assume that $F^{\circ}=\emptyset$, since otherwise there is nothing to prove. Since $F$ is compact, connected, $F^{\circ}=\emptyset$ and $F$ is not a line segment,   by   Proposition \ref{pro-1.4} there exists a compact set $K$ contained in a line segment such that $(F\cup K)^c$ has a bounded connected component. Then it follows from  Lemma \ref{conlem-1.3} that $E+F$ has non-empty interior.
\end{proof}

\section{Some examples}\label{secexamples}

We have proved our main result Theorem \ref{conthm-1.1} in the previous section: if $E,F\subset \R^2$ are  connected sets with cardinalities greater than $1$, and $F$ is compact and not a line segment, then $E+F$ has non-empty interior. 
In this section, we present examples to show that the assumptions in   this result can not be further relaxed.

Our first example shows that there are non-compact connected sets $E,F\subset \R^2$, neither of  which is contained in a line,  such that 
$E+F$ has empty interior. Therefore  the compactness assumption for $F$ in Theorem \ref{conthm-1.1} can not be dropped. 

The example that we will give involves a result on additive functions on $\R$. 
A function  $f: \R\to \R$ is said to be {\em additive} if $f(x+y)=f(x)+f(y)$ for all $x, y \in \R$. It is well-known that under some
regularity assumptions, for instance continuity at a point or Lebesgue measurability, an additive function is necessarily linear. However, F. B. Jones \cite[Theorem 5]{Jones1942} proved  the existence of discontinuous additive functions with connected graphs. 
Based on this result we give the following example.

\begin{exa}
Let $f:\R\to\R$ be a discontinuous additive function whose graph $G_f:=\left\{(x,f(x)): x\in\R\right\}$ is connected. Let $E=F=G_f$. Then both $E, F$ are connected and  not contained in a line in $\R^2$. Moreover, since $f$ is additive, it follows that  $E+F=G_f$ and so $E+F$ has empty interior. 
\end{exa}

We next give  examples to show that the conclusion of Theorem \ref{conthm-1.1} may fail if  $F\subset \R^2$ is a line segment, and $E\subset \R^2$ is a connected set which is not contained in a line in $\R^2$. In our examples, we will take $F$ to be a vertical line segment and $E$  the graph of a certain function. 

We first give a simple necessary and
sufficient condition in terms of the oscillations of a function $f:\R\to\R$ for the existence of a vertical
line segment $L$ such that $G_f+L$ has empty interior. 

Given a function $f:\R\to\R$, the  oscillation of $f$ at a point $x\in \R$ is defined by \[\omega_f(x)=\lim_{\delta\to0}\left[\sup_{y\in [x-\delta,x+\delta]}f(y)-\inf_{y\in [x-\delta,x+\delta]}f(y)\right].\]
We say that $f$ is {\em uniformly oscillated} if $\inf_{x\in \R}\omega_f(x)>0$. Clearly, $f$ is not uniformly oscillated if $f$ has a point of continuity. 
 
\begin{lem}\label{lem:iff con}
Let $f:\R\to\R$ be a function. Then there exists a vertical line segment $L\subset \R^2$ such that $G_f+L$ has empty interior if and only if $f$ is uniformly oscillated. 
\end{lem}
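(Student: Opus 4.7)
The plan is to reduce the statement to a purely analytic condition on $f$. Without loss of generality (by a vertical translation, which does not affect whether $G_f+L$ has non-empty interior) I can take $L=\{0\}\times[0,h]$ for some $h>0$, so that
\[
G_f+L=\{(x,y)\in\R^2:\; f(x)\le y\le f(x)+h\}.
\]
The core of the proof will be the following characterisation, which I will establish first:

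\medskip
\noindent\textbf{Key claim.} $G_f+L$ has non-empty interior if and only if there exists an open interval $I\subset\R$ such that
\[
\sup_{x\in I}f(x)-\inf_{x\in I}f(x)<h.
\]

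\medskip
For the ``only if'' direction I take an open rectangle $(x_0-\delta,x_0+\delta)\times(y_0-\varepsilon,y_0+\varepsilon)\subset G_f+L$ and read off that $y_0+\varepsilon-h\le f(x)\le y_0-\varepsilon$ for all $x$ in that interval, whence $\sup_I f-\inf_I f\le h-2\varepsilon<h$. For the converse, if $I$ is an open interval with $\sup_I f-\inf_I f=s<h$, I simply shrink $I$ slightly to keep it open, set $\varepsilon=(h-s)/3$ and $y_0=\inf_I f+\varepsilon+s/2+\ldots$ (choosing $y_0$ in the non-empty interval $[\sup_I f+\varepsilon,\inf_I f+h-\varepsilon]$), and check that the rectangle $I\times(y_0-\varepsilon,y_0+\varepsilon)$ lies inside $G_f+L$.

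Once the key claim is in place, both directions of the lemma are short. For the ``if'' direction, suppose $f$ is uniformly oscillated with $c:=\inf_{x\in\R}\omega_f(x)>0$. I take $L$ of length $h:=c/2$. For any open interval $I$, picking $x\in I$ and a small $\delta>0$ with $[x-\delta,x+\delta]\subset I$, one has (by the monotonicity of $\sup-\inf$ on expanding intervals and the infimum formulation of the oscillation)
\[
\sup_{I}f-\inf_{I}f\ \ge\ \sup_{[x-\delta,x+\delta]}f-\inf_{[x-\delta,x+\delta]}f\ \ge\ \omega_f(x)\ \ge\ c\ >\ h,
\]
so no open interval $I$ satisfies the condition of the key claim, hence $G_f+L$ has empty interior. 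For the ``only if'' direction, suppose $f$ is not uniformly oscillated. Fix any $h>0$ and any vertical segment $L$ of length $h$; there exists $x_0\in\R$ with $\omega_f(x_0)<h$, so for sufficiently small $\delta>0$ the open interval $I=(x_0-\delta,x_0+\delta)$ satisfies $\sup_I f-\inf_I f<h$, and the key claim yields a non-empty interior for $G_f+L$.

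There is no real obstacle in this argument; the only slightly delicate point is the proof of the key claim, but even that is a direct unpacking of the definitions. The rest is just matching the parameter $h$ (length of $L$) against the quantitative notions of oscillation and uniform oscillation.
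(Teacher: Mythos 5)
Your proof is correct and follows essentially the same route as the paper: normalize $L$ to $\{0\}\times[0,h]$, view $G_f+L$ as a disjoint union of vertical segments $\{x\}\times[f(x),f(x)+h]$, and match $h$ against the oscillation of $f$ on small intervals --- your key claim is just the paper's two arguments (a horizontal segment inside $G_f+L$ forces small oscillation on an interval; small oscillation on an interval produces a full rectangle inside $G_f+L$) packaged as a single equivalence. One cosmetic point: when $c:=\inf_{x\in\R}\omega_f(x)=\infty$ the choice $h=c/2$ is not a valid length, so take instead any finite $h<c$, e.g.\ $h=\min(c,1)/2$.
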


\begin{proof}
In one direction, assume that $\inf_{x\in \R}\omega_f(x)>0$. Let $L$ be a vertical line segment with length $0<\ell<\inf_{x\in\R}\omega_f(x)$. Below we show that $(G_f+L)^{\circ}=\emptyset$. 

By applying a suitable translation we can assume that $L=\{0\}\times[0,\ell]$. Suppose on the contrary that $(G_f+L)^{\circ}\neq\emptyset$. Then in particular $G_f+L$ contains a horizontal line segment, say, $[a,b]\times\{c\}$ for some $a,b,c\in \R$ with $a<b$. Notice that 
\[G_f+L=\bigcup_{x\in \R}\left((x,f(x))+L\right)=\bigcup_{x\in \R}\left(\{x\}\times[f(x), f(x)+\ell]\right)\]
is a disjoint union of vertical line segments. By this and our assumption that $G_f+L\supset [a,b]\times\{c\}$, we easily see that
\[(x,c)\in \{x\}\times[f(x), f(x)+\ell], \ \ \forall x\in [a,b],\]
and thus
\begin{equation}\label{eq:bdd f(x)}
c-\ell\leq f(x)\leq c, \ \ \forall x\in [a,b]. 
\end{equation}
Let $x_0=(a+b)/2$.  Then \eqref{eq:bdd f(x)} clearly implies that $\omega_f(x_0)\leq \ell$, contradicting that $\inf_{x\in\R}\omega_f(x)>\ell$.   The contradiction yields that $(G_f+L)^{\circ}=\emptyset$. 

In the other direction, we will prove that if $\inf_{x\in\R}\omega_f(x)=0$, then $(G_f+L)^{\circ}\neq\emptyset$ for any vertical line segment $L$. Assume that $\inf_{x\in\R}\omega_f(x)=0$. Let $L$ be a vertical line segment with length $\ell>0$. Again we can assume that $L=\{0\}\times[0,\ell]$.

Since $\inf_{x\in\R}\omega_f(x)=0$, then by definition we can find $x_0\in\R$ and $\delta>0$ such that 
\[\sup_{x\in [x_0-\delta, x_0+\delta]}f(x)-\inf_{x\in [x_0-\delta, x_0+\delta]}f(x)<\ell/4.\]
This clearly implies that
\begin{equation}\label{eq:f,l/4}
    f(x_0)-\ell/4<f(x)<f(x_0)+\ell/4
\end{equation}
for all $x\in [x_0-\delta, x_0+\delta]$. 
We claim that $G_f+L$ contains the rectangular \[R:=[x_0-\delta, x_0+\delta]\times[f(x_0)+\ell/4, f(x_0)+(3\ell)/4].\]
To see this, let $(x,y)\in R$. Then $f(x_0)+\ell/4\leq y\leq f(x_0)+(3\ell)/4$. By this and \eqref{eq:f,l/4} we see that $0\leq y-f(x)\leq \ell$. Hence $(0, y-f(x))\in L$. Therefore we have
\[(x,y)=(x, f(x))+(0,y-f(x))\in G_f+L.\]
Since $(x,y)\in R$ is arbitrary, it follows that $R\subset G_f+L$. This proves the above claim, and in particular, that $(G_f+L)^{\circ}\neq\emptyset$.
\end{proof}

According to Lemma \ref{lem:iff con}, if $f:\R\to\R$ is a uniformly oscillated function with a connected graph, then let $E$ be $G_f$ and $F$ be an appropriate vertical line segment, we have $E+F$ has non-empty interior. Such functions do exist as shown in the following examples. 

\begin{exa}\label{exa:jones}
F. B. Jones \cite[Theorems 1-2]{Jones1942} proved that there are  additive functions on $\R$  whose graphs are connected and dense in $\R^2$. Let $f$ be such a function. Since $G_f$ is dense in $\R^2$, it is easy to see that $\inf_{x\in\R}\omega_f(x)=\infty$.  Let $E=G_f$ and let  $F\subset \R^2$ be a vertical line segment. Then from the proof of Lemma \ref{lem:iff con} we see that  $E+F$ has empty interior. 
\end{exa}

Recently, Rosen \cite{rosen} proved that  the set  $E$ in Example \ref{exa:jones}  has positive two dimensional Lebesgue measure. Hence by  Fubini's theorem, $E$ is not Lebesgue measurable. Below we give another example in which $E$ is Borel. 

\begin{exa}
The Ces{\`a}ro function $f:[0,1]\to \R$ is defined by 
\[f(x):=\limsup_{n\to \infty}\frac{a_1(x)+\cdots+a_n(x)}{n},\]
where 
\[x=\sum_{n=1}^{\infty}\frac{a_n(x)}{2^n}\]
is the binary expansion of $x$. Here we adopt the convention that $a_n(x)=1$ for all large $n$ if $x$ has two different binary expansions. 

Notice that  $f$ is a Borel function, and hence its graph $G_f$ is a Borel subset of $\R^2$. Also,  it is easy to check that $\inf_{x\in[0,1]}\omega_f(x)=1$. Moreover,  Vietoris \cite{vietoris21} proved that $G_f$ is connected.

Let $E=G_f$ and  $F\subset \R^2$ be a vertical line segment of length less than $1$. Then we see from the proof of Lemma \ref{lem:iff con} that  $E+F$ has empty interior.
\end{exa}

{\noindent \bf  Acknowledgements}. This work was a part of the author’s PhD thesis. The author is grateful to De-Jun Feng for valuable discussions and suggestions.

\end{document}